\theoremstyle{plain}
\newtheorem{theorem}{Theorem}[section]
\newtheorem{lemma}[theorem]{Lemma}
\theoremstyle{definition}
\newtheorem{definition}[theorem]{Definition}
\theoremstyle{remark}
\newcommand{\abs}[1]{\left|#1\right|}
\begin{document}

\title[p-system of non-linear elasticity]
      {From polynomial integrals of Hamiltonian flows to a model of
non-linear elasticity}

\date{December 2012}
\author{Misha Bialy and Andrey E. Mironov}
\address{M. Bialy, School of Mathematical Sciences, Raymond and
Beverly Sackler Faculty of Exact Sciences, Tel Aviv University,
Israel} \email{bialy@post.tau.ac.il}
\address{A.E. Mironov, Sobolev Institute of Mathematics and
Laboratory of Geometric Methods in Mathematical Physics, Moscow
State University } \email{mironov@math.nsc.ru}
\thanks{M.B. was supported in part by ISF grant 128/10 and A.M. was
supported by the Presidium of the Russian Academy of Sciences (under
the program "Nonliner Systems in Geometry"); grant MD-5134.2012.1
from the President of Russia; and a grant from Dmitri Zimin's
"Dynasty" foundation.
 It is our pleasure to thank these
funds for the support}

\keywords{genuine nonlinearity, p-system,
blow-up, polynomial integrals}

\begin{abstract}
We prove non existence of smooth solutions of a quasi-linear system
suggested by Ericksen in a model of Nonlinear Elasticity. This
system is of mixed elliptic-hyperbolic type. We discuss also a
relation of such a system to polynomial integrals of Classical
Hamiltonian systems.
\end{abstract}

\maketitle

\section{Introduction and main results}
In this paper we study smooth periodic solutions of the following
equation
\begin{equation}\label{1}
 u_{tt}+(\sigma(u))_{xx}=0.
\end{equation}
Throughout this paper the function $u(t,x)$ is assumed to be
periodic in $x$, $u(t,x+1)=u(t,x)$ and $C^2-$smooth on the whole
cylinder or on the half-cylinder. Equation (\ref{1}) is a
compatibility condition of the quasi-linear $2\times 2$ system
usually called $p$-system (here we shall use $\sigma$ instead of $p$
since $p$ is reserved for momentum which appears below).

\begin{equation}
 \left\{
\begin{array}{l}\label{2}
 u_t=-v_x\\
 v_t=(\sigma(u))_x.\\
\end{array} \right.
\end{equation}

One can easily see that the periodicity of $u$ leads to
\begin{equation}
\label{C}
 v(t,x+1)=v(t,x)+C,
\end{equation}
$C$ is a constant. Therefore $v$ is a sum of a linear function and a
function periodic in $x$.

The system (\ref{2}) and the equation (\ref{1}) appear in many
applications. The purpose of this paper is to prove that there are
no smooth periodic solution of these equations. It is widely known,
starting from \cite{L}, that smooth solutions for Hyperbolic
quasi-linear system \textit{generically} do not exist after a finite
time. However rigorous proof of this general belief are not
immediate and usually is not that simple. It is important that our
conditions on $\sigma$ are such that system (\ref{2}) is of mixed
elliptic-hyperbolic type and therefore the analysis of elliptic and
hyperbolic zones as well as the boundary between them is required
(we refer to \cite{T} for a recent survey on mixed problems).

In this paper we shall discuss two appearances of the equation
(\ref{1}) (\ref{2}): the first is related to Classical mechanics and
the second is a model suggested by Ericksen from Nonlinear
Elasticity.

In Classical mechanics one is looking for conserved quantities or
integrals of Hamiltonian flow
\begin{equation}
\label{H}
 \frac{dx}{dt}=\frac{\partial H}{\partial p},\quad \frac{dp}{dt}=-\frac{\partial H}{\partial x}.
\end{equation}
In this context, assume $H=\frac{1}{2}p^2+u(t,x)$ be a Hamiltonian
function with potential function $u(t,x)$. Write
$F=\frac{1}{3}p^3+up+v$. Then the condition that $F$ has constant
values along the flow lines of $H$ reads:
$$
 \frac{dF}{dt}=F_t+pF_x-u_xF_p=0
$$
and leads to the system (2) with the function
$\sigma(u)=\frac{u^2}{2}$. In this case (\ref{1}) turns out to be
dispersionless Boussinesq equation. This fact was first noticed by
V.V. Kozlov in \cite{Kozlov} where trigonometric polynomial
solutions were considered. Doubly periodic solution were further
studied in \cite{B1},\cite{B2}(other quasi-linear systems arising in
Classical mechanics are discussed in \cite{BM1},\cite{BM2}). Our
method in this paper is a continuation of \cite{B0} where the case
of quadratic-like function $\sigma$ is studied.

Another appearance of (\ref{1}) and (\ref{2}) comes from a model of
non-linear elasticity suggested by Ericksen \cite{E}. In this model
$\sigma$ appears to be a cubic-like function (of type II below). We
refer to the paper \cite{S-P}, where this model is discussed.

Motivated by these two applications we shall consider two types of
$\sigma$: \vskip5mm
 I. Quadratic-like type.

The function $\sigma$ is strictly convex function having a minimum
(see Fig. 1).

\vskip50mm

\begin{picture}(170,100)(-100,-80)
\put(60,-15){\vector(0,4){130}}
\put(15,25){\vector(3,0){185}}

\qbezier(80,100)(100,5)(160,100)


\put(195,30){\shortstack{$u$}}
\put(65,112){\shortstack{$\sigma(u)$}}
\put(50,-35){\shortstack{Fig. 1}}
\end{picture}
\newpage
II. Cubic-like type. \vskip4mm

The function $\sigma$ behaves like a cubic polynomial in $u$ (see
Fig. 2), i.e.
$$
 \sigma'(u)<0, \quad for \quad
 u<\alpha\quad and \quad u>\beta,
$$
$$
 \sigma'(u)> 0, \quad for \quad
 u\in(\alpha,\beta),$$
$$
 \sigma''(u)>0, \quad for \quad  u\leq\alpha \quad and \quad \sigma''<0, \quad for \quad u\geq\beta.
$$

\vskip40mm

\begin{picture}(170,100)(-100,-80)
\put(60,-15){\vector(0,4){130}}
\put(15,25){\vector(3,0){185}}

\qbezier(80,100)(100,5)(120,60)
\qbezier(120,60)(140,100)(160,-5)

\put(105,25){\circle*{2}}
\put(132,25){\circle*{2}}
\put(102,10){\shortstack{$\alpha$}}
\put(127,10){\shortstack{$\beta$}}

\put(195,30){\shortstack{$u$}}
\put(65,112){\shortstack{$\sigma(u)$}}
\put(50,-35){\shortstack{Fig. 2}}
\end{picture}

Our main result is given in the following two theorems where the
case of quadratic like function $\sigma$ was proved in \cite{B0} and
is included here for completeness.

\begin{theorem}
\label {main1} {\it Let $\sigma(u)$ be of type I,II. Then any
$C^2$-solution of (2) defined on the half-cylinder
$[t_0,+\infty)\times \mathbf{S}^1$:
$$
 u(t,x+1)=u(t,x),\ v(t,x+1)=v(t,x), t\geq t_0,
$$
which has initial values in the Hyperbolic region
$U_h=\{u<\alpha\}\cup\{u>\beta\}$ must be constant.}
\end{theorem}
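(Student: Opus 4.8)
\medskip
\noindent The plan is to run a Lax-type characteristic/Riccati argument inside the open hyperbolic region — where genuine nonlinearity is available, since $\sigma''\neq0$ on $\{u<\alpha\}\cup\{u>\beta\}$ — and then to use the $x$-periodicity to force all spatial derivatives to vanish. Since the initial data lie in the open set $U_h$ and $\mathbf{S}^1$ is connected, $u(t_0,\cdot)$ takes values in a single connected component of $U_h$; let $[t_0,T)$, $T\le+\infty$, be the maximal interval on which $u(t,\cdot)$ stays in that component. On $[t_0,T)$ the system (\ref{2}) is strictly hyperbolic, and one introduces the Riemann invariants
$$r=v+h(u),\qquad s=v-h(u),\qquad h'(u)=\sqrt{-\sigma'(u)}=:c(u)>0,$$
which are constant along the characteristics $\frac{dx}{dt}=c$ and $\frac{dx}{dt}=-c$, respectively. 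As long as the solution is $C^2$, the characteristics of each family foliate the cylinder, so the ranges of $r$ and of $s$ over $\mathbf{S}^1$ do not depend on $t$; note also that $u$ is recovered from the difference via $r-s=2h(u)$.

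Differentiating the transport equations for $r$ and $s$ in $x$ and passing, following \cite{B0}, to the renormalized gradients
$$w=\sqrt{c}\,r_x,\qquad q=\sqrt{c}\,s_x$$
(the weight $\sqrt{c}$ chosen precisely so that the cross terms cancel), one gets along the corresponding characteristics the Riccati equations
$$\frac{dw}{dt}=\frac{\sigma''(u)}{4\bigl(-\sigma'(u)\bigr)^{5/4}}\,w^2,\qquad \frac{dq}{dt}=\frac{\sigma''(u)}{4\bigl(-\sigma'(u)\bigr)^{5/4}}\,q^2.$$
The coefficient has a fixed sign on each component of $U_h$: it is positive on $\{u<\alpha\}$ (and on the whole hyperbolic region of a type~I function) and negative on $\{u>\beta\}$. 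Suppose, in the component $\{u<\alpha\}$, that $w>0$ at some point. Along the forward $+$-characteristic through that point $w$ stays positive and increasing; moreover, along it the value of $r$ is frozen while $s$ stays in its fixed compact range, so $u$ remains in $[a,\alpha)$ for some $a<\alpha$, a range on which the Riccati coefficient is bounded below by some $\delta>0$ (the coefficient only tends to $+\infty$, never to $0$, as $u\to\alpha$). Hence $\dot w\ge\delta w^2$, so $w$, and therefore $r_x$, becomes infinite in finite time, contradicting that the solution is $C^2$ on the whole half-cylinder — unless the characteristic first runs into the interface $\partial U_h=\{u=\alpha\}$, where $c=0$ forces $w=\sqrt{c}\,r_x\to0$, once more contradicting that $w$ was positive and increasing. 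Either way $w\le0$, hence $r_x\le0$, everywhere on $[t_0,T)\times\mathbf{S}^1$, and symmetrically $s_x\le0$. Making this dichotomy airtight near $\partial U_h$ — where the system changes type and $c$, $h$ and the Riccati coefficient all degenerate — is the step I expect to require the most care.

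It remains to invoke periodicity. In the present theorem the constant $C$ in (\ref{C}) vanishes, so $v$ is periodic in $x$; since $u$ is periodic and $h$ is smooth, $r=v+h(u)$ and $s=v-h(u)$ are periodic in $x$, whence $\int_0^1 r_x\,dx=\int_0^1 s_x\,dx=0$. A continuous function of constant sign and zero mean vanishes identically, so $r_x\equiv s_x\equiv0$ on $[t_0,T)\times\mathbf{S}^1$. Therefore $v_x=\tfrac12(r_x+s_x)\equiv0$ and $h(u)_x=\tfrac12(r_x-s_x)\equiv0$, hence $u_x\equiv0$ because $h'=c>0$; feeding this back into (\ref{2}) gives $u_t=-v_x\equiv0$ and $v_t=(\sigma(u))_x\equiv0$, so $u$ and $v$ are constant on $[t_0,T)$. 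In particular $u\equiv u(t_0)$ never leaves the open component of $U_h$, which forces $T=+\infty$, and the solution is the asserted constant. The component $\{u>\beta\}$ of a type~II function is treated in the same way, now with $\sigma''<0$ (so that $r_x\ge0$, $s_x\ge0$), while a type~I function has no such component and its hyperbolic region $\{u<\alpha\}$ is handled exactly as above.
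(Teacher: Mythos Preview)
Your framework is exactly the paper's --- Riemann invariants, the Lax renormalization $w=(-\sigma')^{1/4}r_x$, the Riccati equations, and the periodicity-plus-sign trick at the end --- and your handling of the case where a characteristic actually reaches $\partial U_h$ (forcing $w=\sqrt{c}\,r_x\to0$, hence $w$ could not have been positive and increasing) is a clean variant of the paper's Lemma~\ref{l1}, part~1. The concluding periodicity argument is identical to the paper's.

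The gap is not where you think it is. You flag the boundary $u\to\alpha$ as the delicate spot, but that case is fine; the real hole is at the \emph{other} end of the range of $u$. Your claim that ``$s$ stays in its fixed compact range'' along the forward $+$-characteristic, and hence that $u\ge a$ there, is justified only on $[t_0,T)$: for a point $(t,x_1(t))$ with $t>T$, the backward $-$-characteristic may hit $\partial U_\alpha$ before reaching $\{t=t_0\}$, so $s$ need not lie in the initial range. Consequently your dichotomy ``blow-up or hit the interface'' omits a third alternative: the characteristic extends to $t=+\infty$ inside $U_\alpha$ with $u\to-\infty$, and then the Riccati coefficient $\sigma''(u)/4(-\sigma'(u))^{5/4}$ may tend to zero fast enough (e.g.\ already for $\sigma=u^2/2$) that $\int k\,dt<\infty$ and $w$ stays bounded. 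In that scenario no contradiction arises and you cannot conclude $r_x\le0$ for that family.

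The paper confronts exactly this obstruction. Its Lemma~\ref{l1}, part~2, isolates the ``$-u\to+\infty$'' characteristics (class~$B_+$), and Lemma~\ref{semi-infinite} shows --- using periodicity and the intersection pattern of the two families --- that class~$B_+$ cannot occur for \emph{both} families at once. This forces the argument to be \emph{asymmetric}: one first gets $(r_2)_x\le0$ for the $B_+$-free family, hence $r_2\equiv\mathrm{const}$ by periodicity; only then does $u$ become a function of $r_1$ alone, which in particular keeps $u$ bounded along $\lambda_1$-characteristics, rules out class~$B_+$ for the first family as well, and finally yields $(r_1)_x\le0$. Your symmetric one-shot conclusion ``$r_x\le0$ and $s_x\le0$'' is not available without first excluding the unbounded-$u$ scenario.
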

To get the result on the whole infinite cylinder one can remove the
initial Hyperbolicity assumption:

\begin{theorem}
\label{main2}
 {\it If the function $\sigma$ is of type I or type II, then any
$C^2$-solution $(u(t,x),v(t,x))$ of the system (\ref{2}) defined on
the whole cylinder $\mathbf{R}\times \mathbf{S}^1$ so that,
$$
 u(t,x+1)=u(t,x),\ v(t,x+1)=v(t,x)
$$ must be constant.}
\end{theorem}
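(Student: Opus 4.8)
The plan is to deduce Theorem~\ref{main2} from Theorem~\ref{main1} by producing, along any global $C^2$ solution, a time slice on which the solution lies entirely in the open hyperbolic region $U_h$ — unless the solution is already constant. The first, elementary, ingredient is that the $x$-mean of $u$ is affine in $t$: integrating the first equation of \eqref{2} over $\mathbf{S}^1$ and using \eqref{C},
\[
 \frac{d}{dt}\int_0^1 u(t,x)\,dx = -\bigl(v(t,1)-v(t,0)\bigr) = -C ,
\]
so $\bar u(t):=\int_0^1 u(t,x)\,dx=\bar u(t_0)-C(t-t_0)$ (likewise $\frac{d}{dt}\int_0^1 v\,dx=\sigma(u(t,1))-\sigma(u(t,0))=0$ by periodicity of $u$). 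In particular, if $C\neq0$ then $\bar u(t)\to-\infty$ in one of the two time directions; since $u\to-\infty$ lies in the hyperbolic region for both types (the branch $u<u_{\min}$ for type~I, the branch $u<\alpha$ for type~II), and since $(u(t,x),v(t,x))\mapsto(u(-t,x),-v(-t,x))$ is a symmetry of \eqref{2}, we may assume this direction is $t\to+\infty$.

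The heart of the matter is an a~priori confinement estimate: one must show that either (a) for some $t_0$ the slice $u(t_0,\cdot)$ is contained entirely in $U_h$, or (b) the solution stays in the closed elliptic region ($\{\alpha\le u\le\beta\}$ for type~II, $\{u\ge u_{\min}\}$ for type~I) for all $t\in\mathbf{R}$. This is exactly where the mixed elliptic–hyperbolic structure enters. In the hyperbolic part one passes to the Riemann invariants $v\pm\int\sqrt{-\sigma'(u)}\,du$, whose ranges are transported along the characteristics $dx/dt=\mp\sqrt{-\sigma'(u)}$, and one has to analyse the behaviour of these characteristics as they approach the interface $\{u=\alpha\}$ or $\{u=\beta\}$, where the speed degenerates to zero; this bounds the oscillation $\max_x u(t,\cdot)-\min_x u(t,\cdot)$. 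Combined with the affine mean, when $C\neq0$ the fact that $\bar u(t)\to-\infty$ together with an oscillation bound forces $u$ out of $[\alpha,\beta]$, i.e.\ alternative~(a). When $C=0$ the mean carries no information and one uses instead the conserved energy $\int_0^1\bigl(\tfrac12 v^2-W(u)\bigr)\,dx$ with $W'=\sigma$ (its conservation follows from \eqref{2} and the periodicity of $v$, which holds because $C=0$) together with the fixed value $\bar u\equiv m$, to reach alternative~(a) or~(b).

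In case~(b), $u$ solves the elliptic equation $u_{tt}+\sigma'(u)u_{xx}+\sigma''(u)u_x^2=0$ on the whole cylinder; a Liouville-type argument for bounded solutions of uniformly elliptic equations on $\mathbf{R}\times\mathbf{S}^1$ (strong maximum principle, or: multiply by $u-m$ and integrate over $[-T,T]\times\mathbf{S}^1$, letting $T\to\infty$, to see $\iint(u_t^2+\sigma'(u)u_x^2)<\infty$) shows $u$, hence $v$, is constant. In case~(a), Theorem~\ref{main1} applied on $[t_0,+\infty)\times\mathbf{S}^1$ gives that $(u,v)$ is constant there. To propagate backwards, note that $w(t,x):=u(2t_0-t,x)$, $\tilde v(t,x):=-v(2t_0-t,x)$ is again a $C^2$ solution of \eqref{2} on $[t_0,+\infty)\times\mathbf{S}^1$ with $w(t_0,\cdot)=u(t_0,\cdot)\subset U_h$, so Theorem~\ref{main1} forces $w\equiv\text{const}$, i.e.\ $(u,v)$ is constant on $(-\infty,t_0]\times\mathbf{S}^1$ as well; hence $(u,v)$ is constant on the whole cylinder.

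The main obstacle is the confinement estimate of the second paragraph: controlling the solution near the elliptic–hyperbolic interface, i.e.\ ruling out that a growing oscillation of $u$ conspires with an escaping mean (or, when $C=0$, with the constraints imposed by the conserved energy) to keep the solution straddling the interface for all time. The affine-mean observation, the Liouville step, and the reduction to Theorem~\ref{main1} via time reversal are then routine; I expect the subcase $C=0$ to be the most delicate, since there one essentially has to re-invest the machinery underlying Theorem~\ref{main1} rather than simply quote it.
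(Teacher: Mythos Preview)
Two concrete problems.

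First, a mis-reading of the hypotheses. In Theorem~\ref{main2} the function $v$ is assumed periodic, so the constant $C$ in \eqref{C} is zero from the outset. Your entire $C\neq 0$ branch (affine mean $\bar u(t)$ drifting to $-\infty$, forcing a hyperbolic slice) is therefore irrelevant to this theorem; only your ``most delicate subcase $C=0$'' is in play, and for that you offer essentially nothing beyond naming the conserved energy.

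Second, and more seriously, the dichotomy you need---either (a) some slice $\{t=t_0\}$ lies entirely in $U_h$, or (b) $u\in[\alpha,\beta]$ everywhere---is the whole content of the theorem on the hyperbolic side, and you have not proved it. You write that analysing characteristics near the interface ``bounds the oscillation $\max_x u-\min_x u$'', but no such oscillation bound is established, and in fact none follows from the Riemann-invariant transport you invoke: the invariants are only defined inside $U_\alpha$ (resp.\ $U_\beta$), the characteristic speeds vanish at the interface, and nothing you have said prevents the solution from straddling the interface for all time. This is precisely the gap you flag in your final paragraph, and it is not filled.

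The paper's route is genuinely different from your plan. It never tries to produce a fully hyperbolic time slice. Instead it fixes any connected component $U'\subset U_\alpha$, and via Lemmas~\ref{l1} and~\ref{semi-infinite} runs a case analysis: either for every $t_0$ all characteristics of one family are of type $A_\pm$, forcing $r_2\equiv\text{const}$ on $U'$ and hence $U'$ to be a strip foliated by straight $\lambda_1$-characteristics with $\lambda_1=0$ (contradiction); or some $t_0$ carries $B$-type characteristics of both families, which by Lemma~\ref{semi-infinite} must point in opposite time directions, forcing $(r_1-r_2)(t_0,\cdot)$ monotone on each hyperbolic interval and hence $u\equiv\alpha$ there (contradiction). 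Either way $U_\alpha$ is empty (or is the whole cylinder, handled by Theorem~\ref{main1}).

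For the elliptic step, note that the equation is \emph{not} uniformly elliptic on $\{\alpha\le u\le\beta\}$: $\sigma'(u)$ vanishes at $u=\alpha,\beta$, so the strong maximum principle does not apply directly, and your ``multiply by $u-m$ and integrate'' sketch does not obviously control the degenerate boundary. The paper avoids this by choosing a strictly concave $f$ with $f(\alpha)=f(\beta)=0$, $f>0$ on $(\alpha,\beta)$, and showing $E(t)=\int_{S^1} f(u)\,dx$ is nonnegative with $\ddot E\le 0$; concavity on all of $\mathbf{R}$ then forces $E$ constant and hence $u,v$ constant.
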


Let us remark that in these two theorems $v$ is assumed to be
periodic together with $u$, that is the constant $C$ in formula
(\ref{C}) is assumed to be zero. Moreover, the pair $u=-Ct, v=Cx$ is
obviously a solution of (\ref{2}) for any $C$. In this example $u$
is obviously periodic in $x$, but $v$ is not. It is an open question
if there are other smooth global solutions of (\ref{2}) periodic in
$x$ for $C\ne 0$. Our next result says that there are no, if $u$ is
assumed to be periodic in both $t$ and $x$.

\begin{theorem}
\label{integral}
 {\it If $\sigma=\frac{u^2}{2}$  then any $C^2$-solution
$(u(t,x),v(t,x))$ of (\ref{2}) with $u$ being doubly periodic
$$
 u(t+1,x)=u(t,x+1)=u(t,x)
$$
must be constant.}
\end{theorem}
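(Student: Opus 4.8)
The plan is to reduce Theorem \ref{integral} to Theorem \ref{main2}. The function $\sigma(u)=u^{2}/2$ is strictly convex and has a minimum at $u=0$, hence is of type I, so Theorem \ref{main2} applies to the pair $(u,v)$ the moment we know that $v$, and not just $u$, is $1$-periodic in $x$. Thus the whole content of the argument is to show that under double periodicity of $u$ the constant $C$ in formula (\ref{C}) must vanish; after that, the conclusion is a black-box appeal to Theorem \ref{main2}.

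First I would observe that $C$ is genuinely a constant. Put $\varphi(t,x):=v(t,x+1)-v(t,x)$. Using the system (\ref{2}) and the $x$-periodicity of $u$ (hence of $\sigma(u)$), one gets $\varphi_{x}=v_{x}(t,x+1)-v_{x}(t,x)=-\bigl(u_{t}(t,x+1)-u_{t}(t,x)\bigr)=0$ and $\varphi_{t}=v_{t}(t,x+1)-v_{t}(t,x)=(\sigma(u))_{x}(t,x+1)-(\sigma(u))_{x}(t,x)=0$, so $\varphi\equiv C$.

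Next I would pin down $C=0$ from periodicity in $t$. Integrating $u_{t}=-v_{x}$ over one $x$-period, $C=\int_{0}^{1}v_{x}(t,x)\,dx=-\frac{d}{dt}m(t)$ with $m(t):=\int_{0}^{1}u(t,x)\,dx$. Since $u$ is $1$-periodic in $t$, so is $m$, whence $-C=\int_{0}^{1}m'(t)\,dt=m(1)-m(0)=0$. Therefore $C=0$, i.e. $v(t,x+1)=v(t,x)$, and $(u,v)$ is a $C^{2}$ solution of (\ref{2}) on $\mathbf{R}\times\mathbf{S}^{1}$ with both components $1$-periodic in $x$; Theorem \ref{main2} (type I case) then forces $(u,v)$ to be constant. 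The symmetric computation with $v(t+1,x)-v(t,x)$ shows in passing that $v$ is in fact doubly periodic, though this is not needed.

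I do not anticipate a real obstacle here; the single substantive point is the elementary observation that a time-periodic potential cannot carry a nonzero flux constant $C$, which is precisely the gap between the hypotheses of Theorems \ref{main2} and \ref{integral}, and the rest is Theorem \ref{main2} used as a black box. A more intrinsic alternative would be to treat $F_{0}(x,p)=\frac{1}{3}p^{3}+u(0,x)p+v(0,x)$ as an invariant function of the exact area-preserving time-$1$ map of the periodically forced Hamiltonian $H=\frac{1}{2}p^{2}+u(t,x)$ and to exploit its twist behaviour as $\abs{p}\to\infty$, but making that rigorous is markedly harder than the reduction above, so I would not pursue it.
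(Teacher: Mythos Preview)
Your reduction is correct and complete: once $C=v(t,x+1)-v(t,x)=0$ is established, Theorem~\ref{main2} applies verbatim. Your key step---that $C=-\dfrac{d}{dt}\int_0^1 u(t,x)\,dx$ together with $t$-periodicity of $u$ forces $C=0$---is valid and uses only the first equation $u_t=-v_x$.

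This is genuinely different from the paper's argument, and in fact more elementary and more general. The paper exploits the specific form $\sigma(u)=u^2/2$: it interprets $F=\tfrac13 p^3+up+v$ as a first integral of the Hamiltonian $H=\tfrac12 p^2+u$, invokes the variational existence of periodic orbits of every type $(m,n)$, and evaluates $F$ at the endpoints of such an orbit to obtain $Am+Bn=0$ for all $m>0$, $n\in\mathbf Z$, whence $A=B=0$ in the decomposition $v=At+Bx+\tilde v$. Your argument, by contrast, never uses the Hamiltonian structure or the particular $\sigma$; it would prove the same statement for any $\sigma$ of type I or II. What the paper's route buys is a conceptual link to the polynomial-integral origin of the system and a simultaneous proof that $v$ is doubly periodic; what your route buys is that no dynamical input (existence of periodic orbits) is needed at all---a single integration in $x$ and the observation that a periodic function has zero mean derivative suffice. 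Your closing remark about the alternative via the time-$1$ map is, amusingly, close in spirit to what the paper actually does.
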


Remarkably in the proof below one shows by means of dynamical
systems theory that the function $v$ must be periodic either. This
fact gives a reduction of Theorem \ref{integral} to
Theorem \ref{main2}.

As a simple corollary of the analysis given in the proofs of
Theorems \ref{main1}, \ref{main2} we can strengthen the result of
Theorem \ref{integral} as follows:

\begin{theorem}
\label{bounded}
 {\it Let $(u(t,x),v(t,x))$ be a $C^2$-solution of (2)
with $\sigma(u)$ being either of type I or type II, then if in
addition $u(t,x)$ is bounded and periodic
$$
 u(t,x+1)=u(t,x)
$$
 then $(u,v)$ must be constant. }
\end{theorem}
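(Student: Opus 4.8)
\section*{Proof proposal for Theorem \ref{bounded}}

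The plan is to show that the only new phenomenon allowed by the hypotheses of Theorem \ref{bounded} — namely that $v$ need not be periodic in $x$ — is in fact excluded once $u$ is bounded, so that the statement reduces immediately to Theorem \ref{main2}. Concretely, I will argue that the ``period defect'' constant $C$ of $(\ref{C})$ must vanish, and then invoke Theorem \ref{main2} verbatim.

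First I would pin down that the number $C$ in $v(t,x+1)=v(t,x)+C$ is a genuine constant, independent of $t$ as well as of $x$. Independence of $x$ is $(\ref{C})$ itself, coming from periodicity of $u$ in $x$; independence of $t$ follows by differentiating the identity $v(t,x+1)-v(t,x)=C(t)$ in $t$ and using the second equation of $(\ref{2})$ together with the periodicity of $\sigma(u(t,\cdot))$, namely $C'(t)=v_t(t,x+1)-v_t(t,x)=(\sigma(u))_x(t,x+1)-(\sigma(u))_x(t,x)=0$ (one may equally differentiate in $x$ and use the first equation of $(\ref{2})$). Next, set $M(t)=\int_0^1 u(t,x)\,dx$ and integrate the first equation $u_t=-v_x$ over one spatial period: $M'(t)=-\int_0^1 v_x(t,x)\,dx=-(v(t,1)-v(t,0))=-C$. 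Hence $M(t)=M(t_0)-C(t-t_0)$ is an affine function of $t$. Since $u$ is bounded on the whole cylinder $\mathbf{R}\times\mathbf{S}^1$, the mean $M$ is a bounded function of $t\in\mathbf{R}$, which forces $C=0$.

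With $C=0$ the relation $(\ref{C})$ says precisely that $v(t,x+1)=v(t,x)$, so $(u,v)$ is a $C^2$ solution of $(\ref{2})$ on $\mathbf{R}\times\mathbf{S}^1$ with \emph{both} components periodic in $x$. Theorem \ref{main2} then applies and yields that $(u,v)$ is constant, completing the proof. I do not expect a genuine obstacle in this argument: the only points needing care are the verification that $C$ is a true constant (not merely a function of $t$) and the elementary observation that a non-constant affine function of time cannot be bounded; the entire analytic substance is already carried by the proof of Theorem \ref{main2}, which is why this is only a corollary of the earlier work.
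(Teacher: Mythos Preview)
Your argument is correct, but it is not the route the paper takes. The paper does \emph{not} first force $C=0$ and then quote Theorem \ref{main2}. Instead it observes that boundedness of $u$ rules out the ``class $B$'' characteristics of Lemma \ref{l1} (those along which $-u\to+\infty$), so in the hyperbolic zone every characteristic is of class $A$ and the Case 1 analysis of the preceding theorem goes through verbatim; this shows that either a hyperbolic domain fills the cylinder and $(u,v)$ is constant, or $u\in[\alpha,\beta]$ everywhere. The remaining elliptic case is then handled by the convexity argument of Section 6, which --- and this is the point the paper stresses --- uses only periodicity of $u$, not of $v$.

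Your reduction is more economical: the mean-value identity $M'(t)=-C$ together with boundedness of $u$ kills the period defect in one line, and Theorem \ref{main2} does the rest as a black box. The paper's approach, by contrast, explains \emph{where} in the characteristic analysis boundedness is actually used (elimination of $B$-type blow-up) and isolates the fact that the elliptic step never needed $v$ periodic in the first place. Both are valid; yours is shorter, theirs is more transparent about which hypotheses feed which parts of the machinery.
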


Several remarks and questions are in order:

1. It is still not clear to us how to classify all solutions of
(\ref{1}) periodic in $x$ with no assumptions on periodicity of $v$
or boundedness of $u$.

2. It was crucial for the proof of our main theorems that within the
Hyperbolic regions the eigenvalues are genuinely non-linear. It
would be interesting to understand the case when the function
$\sigma$ behaves like in van der Waals model (see \cite{Slemrod})
where the genuine non-linearity condition is violated.

3. Another classical tool \cite {CF} for the system (\ref{2}) which
could be used near the points where the mapping
$(t,x)\rightarrow(u,v)$ is local diffeomorphism is the Hodograph
method. However, we don't know how this method can be used globally,
taking care on the singularities, in order to get another proof of
our results. Let us also remark that it would be very interesting to
find a connection with a recent theory of normal forms of the
singularities and the so called Universality conjecture developed in
\cite{Du}.

The paper is organized as follows. In the next section we show the
Dynamical systems argument reducing Theorem \ref{integral} to
Theorem \ref{main2}. Then in Section 4 we prove two key Lemmas. In
Section 5 we treat Hyperbolic part of the Theorems \ref{main1},
\ref{main2} and get Theorem \ref{bounded} as a corollary. Section 6
provides a convexity argument for the Elliptic zones.


\section{Reduction of Theorem \ref{integral} to Theorem \ref{main2}}
In this section we give a reduction of Theorem \ref{integral} to
Theorem \ref{main2} based on Dynamical Systems ideas.
\begin{proof}
In this theorem $\sigma(u)=\frac{u^2}{2}.$ Let $u(t,x)$ be a
$C^2$-solution of (\ref{1}) which is periodic both in $t$ and $x$.
Then let $v(t,x)$ be a function such that the pair $(u,v)$ solves
(\ref{2}). The function $v$ is defined up to a constant and is not
necessarily periodic. It can be written
$$
 v(t,x)=At+Bx+{\tilde v}(t,x),
$$
where $A,B$ are some constants and ${\tilde v}$ is periodic in both
$t,x$. As we mentioned above system (\ref{2}) is equivalent to the
fact that the function $$F=\frac{1}{3}p^3+u(t,x)p+v(t,x)$$ has
constant values along the Hamiltonian flow of
$$H=\frac{1}{2}p^2+u(t,x).$$ It is a standard result of calculus
variations, that for any positive integer $m$ and any integer $n$
there exists periodic orbit of the Hamiltonian flow of type $(m,n)$.
This means that there is a solution of the Hamilton equations
(\ref{H}) $(x(t),p(t))$ with the property

\begin{equation}\label{3}
 x(t+m)=x(t)+n,\ p(t+m)=p(t),\ m,n\in \mathbb{Z},\ m>0.
\end{equation}
Along this orbit $F$ has a constant value. Therefore
\begin{equation}
\label{F}
 F(t+m,x(t+m),p(t+m))=F(t,x(t),p(t)).
\end{equation}
Left hand side of (\ref{F}) with the help of (\ref{3}) and
periodicity of $u$ equals
$$
 F(t+m,x(t+m),p(t+m))$$
$$=\frac{1}{3}p^3(t)+u(x(t)+n,t+m)p(t)+v(x(t)+n,t+m)$$
$$
 =\frac{1}{3}p^3(t)+u(x(t),t)p(t)+v(x(t)+n,t+m)$$
$$
 =\frac{1}{3}p^3(t)+u(x(t),t)p(t)+Am+Bn+At+Bx(t)+{\tilde v}(x(t),t).
$$
The right hand side of (\ref{F}) is the following
$$
 F(t,x(t),p(t))=\frac{1}{3}p^3(t)+u(x(t),t)p(t)+At+Bx(t)+{\tilde v}(x(t),t).
$$
Equating the expressions of the right and the left hand side we get
the identity:
$$
 Am+Bn=0
$$
for any $m,n$. So $A=B=0$. Thus $v$ is a periodic function so
Theorem \ref{main2} applies and yields the result.

\end{proof}


\section{Preliminaries on the Hyperbolic regions}
 Here we shall collect the needed facts on the p-systems (see \cite{Serre} and also \cite{Slemrod}).
The system (2) can be written in the form
$$
  \left(
  \begin{array}{c}
   u \\
  v\\
  \end{array}\right)_t+A(u,v)
  \left(
  \begin{array}{c}
   u \\
  v\\
  \end{array}\right)_x=0,\ \
  A  \left(
  \begin{array}{cc}
   0 & 1 \\
   -\sigma'(u) & 0\\
  \end{array}\right).
$$
Let $(u(t,x),v(t,x))$ be a solution periodic in $x$.
Denote by
$$
 U_e=\{(t,x): u(t,x)\in(\alpha,\beta)\}
$$
the elliptic region where the matrix $A$ has complex eigenvalues.
Denote by $U_h$ the hyperbolic region consisting of two disjoint
domains $$U_h=U_\alpha\cup U_\beta,$$ where
$$
U_{\alpha}=\{(t,x): u(t,x)<\alpha\}, \ U_{\beta}=\{(t,x):
u(t,x)>\beta\}.
$$
On $U_h$ the matrix $A$ has two real distinct eigenvalues
$$
 \lambda_1=\sqrt{-\sigma'(u)},\ \lambda_2=-\sqrt{-\sigma'(u)}.
$$
The boundaries of $U_e$ and $U_h$ belong to
$$
 U_0=\{(t,x): u(t,x)=\alpha \quad or \quad u(t,x)=\beta\}.
$$

It is important for the sequel that $U_\alpha, U_\beta$ have
disjoint closure on the cylinder. Therefore the characteristics of
the system cannot jump from one of the domains $U_\alpha$ or
$U_\beta$ to the other. We analyze the behavior of the
characteristics in $U_\alpha$, and for the $U_\beta$ all conclusions
are analogous with the obvious changes.

In $U_{\alpha}$ there are Riemann invariants
$$
 r_{1}=v- \int_u^\alpha \sqrt {-\sigma'(u)}du,\quad  r_{2}=v+ \int_u^\alpha \sqrt {-\sigma'(u)}du,
$$
$$
 (r_i)_t+\lambda_i(r_i)_x=0, \ i=1,2,
$$
 and so $u,v$ can be recovered
from the Riemann invariants by the formulas:
\begin{equation}\label{q}
 v=\frac
{r_1+r_2}{2},\quad u=q^{-1}\left(\frac{r_2-r_1}{2}\right),
\end{equation}
where by definition
$$
 q(u):=\int_u^\alpha \sqrt {-\sigma'(s)}ds,
$$
 is a positive monotone decreasing function for $u<\alpha$
with
$$
 q(\alpha)=0,\quad  q'(u)=-\sqrt{-\sigma'(u)},\quad
 q''(u)=\frac{\sigma''(u)}{2\sqrt{-\sigma'(u)}}.
$$
It is crucial fact
that both eigenvalues are genuinely non-linear in $U_{\alpha}$ by the formulas:
$$
 (\lambda_1)_{r_1}=(\lambda_2)_{r_2}=\frac{\sigma''(u)}{4\sigma'(u)}\ne 0.
$$
Notice that near the boundary $\partial U_{\alpha}$ the non-linearity becomes
infinite. Moreover, verifying literarily the Lax method \cite{L} for
our $p$-system one arrives to the following Riccati equations along
characteristics of the first and the second eigenvalues:
\begin{equation}\label{R}
L_{v_1}(z_1)+kz_1^2=0,\ \ L_{v_2}(z_2)+kz_2^2=0
\end{equation}
where
$$
 z_1:=(r_1)_x (-\sigma'(u))^{\frac{1}{4}}, \quad z_2:=(r_2)_x
 (-\sigma'(u))^{\frac{1}{4}}, \quad k:=-\frac
 {\sigma''(u)}{4(-\sigma'(u))^{\frac{5}{4}}},
$$
and
$$
 L_{v_1}=\partial_t+\lambda_1\partial_x,\ \ L_{v_2}=\partial_t+\lambda_2\partial_x
$$
stand for derivatives along the first and the second characteristic
fields respectively.

\section{The Key Lemmas} The following two lemmas are crucial for
the proofs. Recall again that we shall state them for $U_\alpha$ so
that for $U_\beta$ the statements go with obvious replacements.

\begin{lemma}
\label{l1}
 Along the characteristic curves we have:

1)\ If a characteristic curve of the first or of the second
eigenvalue starting from the initial time $t_0$ reaches the boundary
$\partial U_\alpha$ in a finite time $t_+>t_0$ (respectively
$t_-<t_0$), then the corresponding Riemann invariant satisfies
$r_x\leq 0$ (resp. $r_x\geq 0$) along this characteristic.

2)\ If a characteristic curve of the first or of the second
eigenvalue extends to a semi-infinite interval $( [t_0,+\infty)$
(resp. $(-\infty,t_0]$), then  for the corresponding Riemann
invariant either  $(r)_x \leq 0$ (resp. $r_x\geq 0$) or $-u$ and
$-\sigma'(u)$ tend to $ +\infty$ along this characteristic curve
when $t \rightarrow +\infty$ (resp. $t \rightarrow -\infty$).

\end{lemma}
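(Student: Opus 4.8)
The plan is to analyze the Riccati equation (\ref{R}) along each characteristic and use the sign structure of the coefficient $k$ together with the behaviour near $\partial U_\alpha$. I will carry out the argument for a characteristic of the first eigenvalue; the second is symmetric. Recall $z_1=(r_1)_x(-\sigma'(u))^{1/4}$ satisfies $L_{v_1}(z_1)+kz_1^2=0$ with $k=-\sigma''(u)/(4(-\sigma'(u))^{5/4})$. In $U_\alpha$ we have $u<\alpha$, hence $\sigma''(u)>0$ and $-\sigma'(u)>0$, so $k<0$ throughout $U_\alpha$; moreover $z_1$ and $(r_1)_x$ have the same sign since $(-\sigma'(u))^{1/4}>0$. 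Thus it suffices to control the sign of $z_1$ along the characteristic. Parametrizing the characteristic by $t$ and writing $w(t)=z_1$ along it, the equation becomes $\dot w = -k(t)\,w^2$ with $-k(t)>0$.

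For part 1): suppose the characteristic reaches $\partial U_\alpha$ at a finite time $t_+>t_0$. Near $t_+$ we have $u\to\alpha$, so $-\sigma'(u)\to 0$ and therefore the Riccati coefficient $-k(t)=\sigma''(u)/(4(-\sigma'(u))^{5/4})\to+\infty$. Suppose for contradiction that $w(t_0)>0$, i.e. $(r_1)_x>0$ at the initial time. Since $\dot w=-k w^2\ge 0$ wherever $w$ is defined and $-k>0$, $w$ is nondecreasing, so $w(t)\ge w(t_0)>0$ on $[t_0,t_+)$; then $\dot w\ge -k(t)\,w(t_0)^2$, and because $\int^{t_+}(-k(t))\,dt$ diverges (the integrand blows up like an inverse power of $-\sigma'(u)$ as we approach the boundary — this is the point where the genuine-nonlinearity-becoming-infinite remark is used), we get $w(t)\to+\infty$ in finite time, i.e. a blow-up strictly before $t_+$. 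But $z_1=(r_1)_x(-\sigma'(u))^{1/4}$ is continuous (indeed $C^1$) up to $\partial U_\alpha$ since $(u,v)$ is $C^2$, so $w$ cannot blow up inside the closure of $U_\alpha$ — contradiction. Hence $w(t_0)\le 0$, equivalently $(r_1)_x\le 0$ at $t_0$; and since $w$ is nondecreasing it stays $\le 0$ back in time but we actually need it along the whole characteristic — running the same monotonicity argument shows $w\le 0$ on all of $[t_0,t_+)$ because if it ever became positive it would blow up before reaching the boundary. The case of a characteristic of the second eigenvalue, or one reaching the boundary at $t_-<t_0$, reverses the time orientation and hence the inequality, giving $r_x\ge 0$.

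For part 2): suppose the characteristic extends to all of $[t_0,+\infty)$ and that $(r_1)_x$ is not $\le 0$ along it, so $w(t_1)>0$ for some $t_1$. As above $w$ is nondecreasing, hence $w(t)\ge w(t_1)>0$ and in fact $w(t)>0$ for all $t\ge t_1$; since $w$ cannot blow up in finite time (the solution is global and $C^2$), the integral $\int_{t_1}^{\infty}(-k(t))\,dt$ must converge (otherwise the comparison $\dot w\ge -k(t)w(t_1)^2$ forces finite-time blow-up). Now $-k(t)=\sigma''(u)/(4(-\sigma'(u))^{5/4})$, and along this characteristic $u<\alpha$. Convergence of this integral together with the hypotheses on $\sigma$ forces $-\sigma'(u)\to+\infty$ along the characteristic: if $-\sigma'(u)$ stayed bounded then $u$ would stay in a compact subinterval of $(-\infty,\alpha)$ on which $-k$ is bounded below by a positive constant, contradicting convergence of the integral over an infinite interval. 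Hence $-\sigma'(u)\to+\infty$, and by the type I/II shape of $\sigma$ (monotonicity of $\sigma'$ for $u<\alpha$) this is equivalent to $-u\to+\infty$. For the second eigenvalue or the interval $(-\infty,t_0]$ the signs reverse as before.

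The main obstacle is making precise the claim that $\int^{t_+}(-k(t))\,dt$ diverges as the characteristic approaches $\partial U_\alpha$: one must relate the $t$-integral along the characteristic to the behaviour of $-\sigma'(u)$, i.e. control the rate at which $u$ approaches $\alpha$. This requires using that $u$ is $C^2$ (so $r_1,r_2$ and hence $z_1$ extend continuously to the boundary) and examining the transversality of the characteristic to $\partial U_\alpha$; where the characteristic is tangent to the boundary a slightly more careful local analysis of the ODE $\dot w=-k(t)w^2$ is needed, but the sign conclusion is unaffected since $w$ is monotone regardless.
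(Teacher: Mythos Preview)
Your argument for Part 1 follows the same Riccati blow-up mechanism as the paper, and the divergence of $\int^{t_+}(-k)\,dt$ is indeed the key point. The paper proves this divergence explicitly: since the characteristic ODE $\dot x=\pm\sqrt{-\sigma'(u)}$ has bounded right-hand side near $\partial U_\alpha$, the curve converges to a boundary point $(t_+,x_+)$; then $C^1$-smoothness of $u$ gives $|u(t,x(t))-\alpha|\le C_1|t-t_+|$, and $|\sigma'(u)|\le C_2|u-\alpha|$, so $-k(t)\ge C_0|t-t_+|^{-5/4}$, which is non-integrable. Your remarks about ``transversality'' and ``tangency'' are beside the point; the estimate is purely a Lipschitz bound.

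Part 2, however, has a genuine gap. You argue: if $w>0$ somewhere then $\int_{t_1}^\infty(-k)\,dt<\infty$, and ``if $-\sigma'(u)$ stayed bounded'' one gets a contradiction. But the negation of ``$-\sigma'(u)\to+\infty$'' is not ``$-\sigma'(u)$ stays bounded''; it is ``$\liminf(-\sigma'(u))<+\infty$''. Nothing in your argument rules out $u$ oscillating along $\gamma_1$, returning infinitely often into a fixed compact subinterval of $(-\infty,\alpha)$ while making increasingly brief visits, so that $\int(-k)\,dt$ still converges. Your Riccati analysis gives no monotonicity of $u$ along $\gamma_1$, and without monotonicity the conclusion $-u\to+\infty$ does not follow from convergence of the integral alone.

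The paper closes this gap by a completely different and more involved argument that uses the \emph{other} characteristic family together with the periodicity of $(u,v)$ in $x$. Through each point $P$ on $\gamma_1$ one runs the $\lambda_2$-characteristic $\gamma_2$ backward; either it reaches $t=t_0$ (``accessible'' points), where periodicity bounds $r_2$ and hence $u$, or it hits $\partial U_\alpha$ (``unaccessible'' points), where Part~1 applied to $\gamma_2$ gives $(r_2)_x\ge 0$ and hence $-u$ is monotone increasing along $\gamma_1$ on those stretches. Combining these, either $-u$ is eventually monotone increasing along $\gamma_1$ (and then convergence of the integral forces $-u\to+\infty$), or $-u$ is globally bounded along $\gamma_1$ (contradicting convergence of the integral). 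This use of periodicity and of the second family is essential and is entirely absent from your proposal.
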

\begin{proof}
To prove the Lemma we use the exact formula for the solutions of
equation (\ref{R}):
$$
 z(t)=\frac{z(t_0)}{1+z(t_0)\int_{t_0}^t k(s)ds}.
$$
Let us prove the first part of the Lemma. Suppose that
characteristic extends to the maximal interval $[t_0;t_+).$ Recall
that the characteristics are solutions of the equation
\begin{equation}
\label{ode} \dot{x}=\pm\sqrt{-\sigma'(u)}.
\end{equation}
It is a standard fact in ODE theory, that if a characteristic curve
approaches the boundary of $U_{\alpha}$, so that $u$ tends to
$\alpha$, then the characteristic curve must converge to a limit
point say $(t_+,x_+)$ on the boundary $\partial U_\alpha$ (see Fig.
3).

\vskip20mm

\begin{picture}(170,100)(-100,-80)
\put(-35,-25){\vector(0,4){80}}
\put(-35,-25){\vector(4,0){220}}

\qbezier(-20,-10)(50,-5)(100,20)

\qbezier(100,20)(100,40)(140,41)
\qbezier(140,41)(180,40)(180,20)
\qbezier(180,20)(180,0)(140,-1)
\qbezier(140,-1)(100,0)(100,20)

\qbezier(105,30)(105,20)(105,10)
\qbezier(115,35)(115,20)(115,5)
\qbezier(125,37)(125,20)(125,3)
\qbezier(135,39)(135,20)(135,1)
\qbezier(145,39)(145,20)(145,1)
\qbezier(155,37)(155,20)(155,3)
\qbezier(165,35)(165,20)(165,5)
\qbezier(175,30)(175,20)(175,10)

\put(-25,0){\shortstack{$\gamma$}}
\put(145,44){\shortstack{$\partial U_{\alpha}$}}
\put(12,35){\shortstack{$U_{\alpha}$}}
\put(55,27){\shortstack{$(t_+,x_+)$}}
\put(-5,4){\shortstack{$(t_0,x(t_0))$}}
\put(188,-28){\shortstack{$t$}}
\put(-37,58){\shortstack{$x$}}

\put(100,20){\circle*{3}}
\put(15,-6){\circle*{3}}

\put(60,-50){\shortstack{Fig. 3}}

\end{picture}

Moreover, it follows then that the integral
\begin{equation} \int_{t_0}^{t_+}
 k(s)ds=-\int_{t_0}^{t_+}\frac{\sigma''(u(s,x(s)))}{4(-\sigma'(u(s,x(s)))^{\frac{5}{4}}}ds
 \label{int}
\end{equation}
 diverges to $-\infty$. Indeed, for
$t\rightarrow t_+$ the function $u(t,x(t))\rightarrow \alpha$  and
can be estimated from above by
$$
 \abs{u(t, x(t))-\alpha}\leq C_1 \abs
 {t-t_+} ,
$$
also for $u$ close to $\alpha$ one can estimate:
$$
 \abs{\sigma'(u)}=\abs{\int_u^\alpha \sigma''(u)du}\leq
 C_2\abs{u-\alpha}, \ where\
 C_2=\max_{u\in[\alpha-1,\alpha]}\sigma''(u).
$$

So the nominator in of the integrand of (\ref{int}) is bounded away
from zero and the denominator is less or equal then
$
 C_1C_2 \abs {t-t_+ }^{\frac{5}{4}},
$
and
$$
 -\int_{t_0}^{t_+}\frac{\sigma''(u)}{4(-\sigma'(u))^{\frac{5}{4}}}ds<
 -\int_{t_0}^{t_+}\frac{C_0}{C_1C_2 \abs {t-t_+ }^{\frac{5}{4}}}ds\rightarrow -\infty,
$$
 thus the integral (\ref{int}) diverges. This proves the
first part of the lemma.

The second part of the lemma is as follows.

\vskip2.5cm

\begin{picture}(170,100)(-100,-80)
\put(-35,-25){\vector(0,4){100}}
\put(-35,-25){\vector(4,0){220}}

\qbezier(-20,30)(75,35)(120,75)
\qbezier(-20,10)(75,15)(150,60)
\qbezier(-20,-10)(95,-5)(180,45)

\qbezier(-20,32)(-20,50)(-20,75)
\qbezier(-20,-12)(-20,-20)(-20,-23)
\qbezier(-10,33)(-10,50)(-10,75)
\qbezier(-10,-11)(-10,-20)(-10,-23)
\qbezier(0,34)(0,50)(0,75)
\qbezier(0,-10)(0,-20)(0,-23)
\qbezier(10,35)(10,50)(10,75)
\qbezier(10,-9)(10,-20)(10,-23)
\qbezier(20,36)(20,50)(20,75)
\qbezier(20,-8)(20,-20)(20,-23)
\qbezier(30,37)(30,50)(30,75)
\qbezier(30,-7)(30,-20)(30,-23)
\qbezier(40,39)(40,50)(40,75)
\qbezier(40,-6)(40,-20)(40,-23)
\qbezier(50,41)(50,50)(50,75)
\qbezier(50,-5)(50,-20)(50,-23)
\qbezier(60,44)(60,50)(60,75)
\qbezier(60,-3)(60,-20)(60,-23)
\qbezier(70,48)(70,50)(70,75)
\qbezier(70,-1)(70,-20)(70,-23)
\qbezier(80,53)(80,54)(80,75)
\qbezier(80,2)(80,-20)(80,-23)
\qbezier(90,58)(90,60)(90,62)
\qbezier(90,5)(90,-20)(90,-23)
\qbezier(100,71)(100,74)(100,75)
\qbezier(100,7)(100,-20)(100,-23)
\qbezier(110,69)(110,74)(110,75)
\qbezier(110,10)(110,-20)(110,-23)

\qbezier(120,14)(120,-20)(120,-23)
\qbezier(130,18)(130,-20)(130,-23)
\qbezier(140,22)(140,-20)(140,-23)
\qbezier(150,27)(150,-20)(150,-23)
\qbezier(160,32)(160,-20)(160,-23)
\qbezier(170,21)(170,-20)(170,-23)

\qbezier(180,21)(180,-20)(180,-23)
\qbezier(180,43)(180,37)(180,36)

\put(-30,10){\shortstack{$\gamma$}}
\put(165,26){\shortstack{$\partial U_{\alpha}$}}
\put(82,66){\shortstack{$\partial U_{\alpha}$}}
\put(-15,0){\shortstack{$(t_0,x(t_0))$}}
\put(188,-28){\shortstack{$t$}}
\put(-37,78){\shortstack{$x$}}

\put(4,12){\circle*{3}}

\put(60,-50){\shortstack{Fig. 4}}

\end{picture}

For an infinite characteristic (Fig. 4)
$$
 \gamma_1(t)=(t, x_1(t)), \ t\in[t_0;t_+),\ t_+=+\infty,
$$
there are two possibilities.

The first is when the integral (\ref{int}) diverges to $-\infty$, in
this case $r_x\leq 0$ exactly as in the previous case.

\vskip2.5cm

\begin{picture}(170,100)(-100,-80)
\put(-35,-25){\vector(0,4){100}}
\put(-35,-25){\vector(4,0){220}}
\qbezier(-20,10)(75,15)(150,60)
\qbezier(-20,60)(80,56)(150,10)


\put(115,33){\shortstack{$P\in A$}}
\put(-16,68){\shortstack{$(t_0,x_2(t_0))$}}

\put(188,-28){\shortstack{$t$}}
\put(-37,78){\shortstack{$x$}}
\put(-33,10){\shortstack{$\gamma_1$}}
\put(-33,60){\shortstack{$\gamma_2$}}
\put(102,35){\circle*{3}}
\put(5,58){\circle*{3}}

\put(60,-50){\shortstack{Fig. 5}}

\end{picture}

In the second possibility the integral (\ref{int}) is converging. In
this case we need to prove that $-u$ and $-\sigma'(u)$ must tend to
$+\infty$ as $t\rightarrow +\infty$. For this we use the periodicity
of $u,v$ in $x$. Let $P=(t_*, x_1(t_*))$ be any point on the
characteristic $\gamma_1$. Consider characteristic of the second
eigenvalue $\gamma_2=(t, x_2(t))$ passing through this point $P$.
Let us follow $\gamma_2$ backwards. Either the characteristic
$\gamma_2$ can be extended backwards on $[t_0;t_*)$ (in such a case
we shall call point $P$ accessible from $t_0$, see Fig. 5) or it can
be extended backward to the maximum interval of existence $(t_-;
t_*), t_0<t_-$ (then $P$ will be called unaccessible).

Notice that in the last case the possibility $x_2(t)\rightarrow
+\infty, t\searrow t_-$ is easily excluded because by periodicity in
$x$, the solutions of the equation (\ref{ode}) are bounded on any
compact interval of time. Thus in case point $P$ is unaccessible the
characteristic $\gamma_2$ reaches the boundary $\partial U_\alpha$
in a backward time (Fig. 6).

Denote by $\textsf{A}$ and $\textsf{B}$ the set of all accessible
and unaccessible points respectively on the characteristic
$\gamma_1$. The set $\textsf{A}$ is obviously open in $\gamma_1$ and
so consists of the union of open intervals. From the periodicity
condition it follows that $r_2$ is bounded on the line $t=t_0$.
Moreover, since $r_2$ is preserved along characteristics of the
second family and by the formula (\ref{q}), it follows that $u$ is
uniformly bounded on all intervals of the set $\textsf{A}$ (see Fig.
6).

\vskip2.5cm

\begin{picture}(170,100)(-100,-80)
\put(-35,-25){\vector(0,4){100}}
\put(-35,-25){\vector(4,0){220}}

\qbezier(-15,-30)(-15,50)(-15,75)
\qbezier(-15,-20)(150,-10)(180,75)

\qbezier(15,10)(15,25)(35,25)
\qbezier(35,25)(55,25)(55,10)
\qbezier(55,10)(55,-5)(35,-5)
\qbezier(35,-5)(15,-5)(15,10)

\qbezier(50,0)(50,-2)(50,20)
\qbezier(40,-4)(40,10)(40,23)
\qbezier(35,-4)(35,10)(35,24)
\qbezier(30,-3)(30,10)(30,23)
\qbezier(25,-2)(25,10)(25,22)

\qbezier(20,0)(20,10)(20,20)
\qbezier(45,-3)(45,10)(45,23)

\qbezier(75,50)(75,65)(95,65)
\qbezier(95,65)(115,65)(115,50)
\qbezier(115,50)(115,35)(95,35)
\qbezier(95,35)(75,35)(75,50)

\qbezier(110,61)(110,50)(110,39)
\qbezier(100,63)(100,50)(100,37)
\qbezier(95,64)(95,50)(95,36)

\qbezier(90,63)(90,50)(90,37)
\qbezier(85,63)(85,50)(85,37)

\qbezier(80,61)(80,50)(80,39)
\qbezier(105,62)(105,50)(105,38)

\qbezier(-15,73)(110,68)(170,55)
\qbezier(-15,47)(100,36)(142,25)
\qbezier(115,46)(135,44)(158,38)

\qbezier(-15,28)(80,25)(108,5)
\qbezier(-15,37)(95,32)(125,13)
\qbezier(55,7)(70,6)(85,-3)
\qbezier(-15,-2)(25,-3)(60,-10)

\put(-6,13){\shortstack{$\partial U_{\alpha}$}}
\put(54,48){\shortstack{$\partial U_{\alpha}$}}

\put(188,-28){\shortstack{$t$}}
\put(-37,78){\shortstack{$x$}}
\put(-25,-40){\shortstack{$t=t_0$}}
\put(175,80){\shortstack{$\gamma_1$}}

\put(170,55){\circle*{3}}
\put(142,25){\circle*{3}}
\put(108,5){\circle*{3}}
\put(60,-10){\circle*{3}}

\put(-10,77){\shortstack{$\gamma_2$}}
\put(-10,50){\shortstack{$\gamma_2$}}
\put(120,49){\shortstack{$\gamma_2$}}

\put(160,34){\shortstack{$B$}}
\put(126,6){\shortstack{$A$}}
\put(85,-12){\shortstack{$B$}}

\put(60,9){\shortstack{$\gamma_2$}}
\put(103,25){\shortstack{$\gamma_2$}}

\put(-10,30){\shortstack{$\gamma_2$}}
\put(-10,1){\shortstack{$\gamma_2$}}

\put(60,-50){\shortstack{Fig. 6}}

\end{picture}

On the other hand, it follows from the first claim of the lemma that
on each interval of the interior of $\textsf{B}$, $(r_2)_x\geq 0$
and since $r_2$ is preserved along characteristics of the second
family $r_2$ is an increasing function on each interval of the
interior of $\textsf{B}$ and therefore $-u$ is increasing also due
to (\ref{q}).

Summing up these two properties we get for the function $-u$ along
$\gamma_1$ the following possibilities: either the set $\textsf{A}$
is bounded and then $-u$ is increasing function along $\gamma_1$
starting from a certain point, or the set $\textsf{A}$ is unbounded
but then $-u$ must be bounded on the whole $\gamma_1$, since $-u$ is
uniformly bounded on the intervals of $\textsf{A}$ which may
alternate with intervals of $\textsf{B}$ where the function $-u$ is
monotonic. Notice that the last possibility cannot happen in fact
since we are in the case of convergent integral (\ref{int}). So we
have that $-u$ is a monotonic increasing function and the claim
follows. Lemma is proved.
\end{proof}

Lemma \ref{l1} enables us to distinguish between two types of
characteristics which start at $t_0$ in a positive or negative
direction of time as follows.

\begin{definition}

Let $\gamma$ be a characteristic curve lying in the Hyperbolic
domain $U_\alpha$ defined on a \textit{maximal} interval $[t_0,t_+)$
(or respectively $(t_-,t_0]$). We shall say that $\gamma$ is of type
$B_+$ (res. $B_-$) if $t_+=+\infty$ (resp. $t_-=-\infty$) and
$-u\rightarrow +\infty$ when $t\rightarrow +\infty$ (resp.
$t\rightarrow -\infty$).

We shall say that $\gamma$ is of type $A_+$ (resp. $A_-$) in the
opposite case. That is if either $t_+$ (resp. $t_-$) is finite, or
$t_+=+\infty$ (resp. $t_-=-\infty$) and $-u$ does not tend to
$+\infty$ when $t\rightarrow +\infty$ (resp. $t\rightarrow
-\infty$).
\end{definition}

By Lemma \ref{l1} if $\gamma$ is of type $A_+$ then $(r)_x\leq 0$ along $\gamma$, and
if $\gamma$ is of type $A_-$ then $(r)_x\geq 0$ along $\gamma$.

\begin{lemma}
\label{semi-infinite}
 There cannot exist two semi-infinite
characteristics in the same direction
$$\gamma_1=(t,x_1(t)),\ \gamma_2=(t, x_2(t))$$ of the first and the
second eigenvalue such that both of them belong to the same class
$B_{\pm}$.

\end{lemma}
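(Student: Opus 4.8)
The plan is to exploit the fact that a first‑family and a second‑family characteristic run around the cylinder in \emph{opposite} directions, so (in the relevant direction of time) they are forced to meet infinitely often, while the constant value carried by one of them is incompatible with the value of that same Riemann invariant along the other. Suppose, for contradiction, that both $\gamma_1$ and $\gamma_2$ lie in $U_\alpha$ and are of class $B_+$; the $B_-$ case follows by reversing time, and the case of $U_\beta$ by the obvious changes ($\alpha\to\beta$, $u\to+\infty$). Then both curves are defined on a common half‑line $[T,+\infty)$, $\gamma_1$ carries the constant value $r_1\equiv c_1$, $\gamma_2$ carries $r_2\equiv c_2$, and along each of them $-u\to+\infty$; since $\sigma'$ is increasing on $(-\infty,\alpha]$ this also forces $-\sigma'(u)\to+\infty$.

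\textbf{Step 1 (the curves meet at times $t_n\to+\infty$).} In $U_\alpha$ one has $\dot x_1=\lambda_1=\sqrt{-\sigma'(u)}>0$ and $\dot x_2=\lambda_2=-\sqrt{-\sigma'(u)}<0$. Pass to continuous lifts $\tilde x_1,\tilde x_2\colon[T,\infty)\to\mathbf{R}$. Because $u(t,x_1(t))\to-\infty$ along $\gamma_1$, for $t$ large $u(t,x_1(t))\le\alpha-1$, and hence $\dot{\tilde x}_1\ge\sqrt\delta$ with $\delta:=-\sigma'(\alpha-1)>0$; meanwhile $\dot{\tilde x}_2\le 0$ throughout. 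Therefore $\tilde x_1(t)-\tilde x_2(t)$ is eventually increasing with derivative $\ge\sqrt\delta$, so it tends to $+\infty$ and attains every sufficiently large integer; each such integer yields a time $t_n$ with $x_1(t_n)=x_2(t_n)$ on $\mathbf{S}^1$. Denote the common point $P_n$; then $t_n\to+\infty$ and $P_n\in U_\alpha$.

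\textbf{Step 2 (contradiction via the Riemann invariants).} By (\ref{q}), $q(u)=\tfrac12(r_2-r_1)$, i.e. $r_1=r_2-2q(u)$ at every point of $U_\alpha$. Evaluate at $P_n$: since $P_n\in\gamma_2$ and $r_2$ is preserved along characteristics of the second family, $r_2(P_n)=c_2$, so $r_1(P_n)=c_2-2q(u(P_n))$. As $P_n$ lies on the $B_+$ curve $\gamma_2$ at time $t_n\to+\infty$, we have $u(P_n)\to-\infty$, and $q(u)=\int_u^\alpha\sqrt{-\sigma'(s)}\,ds\ge(\alpha-1-u)\sqrt\delta\to+\infty$; hence $r_1(P_n)\to-\infty$. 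But $P_n\in\gamma_1$ and $r_1$ is preserved along characteristics of the first family, so $r_1(P_n)=c_1$ for all $n$, contradicting $r_1(P_n)\to-\infty$. This proves the lemma.

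The step I expect to be the main obstacle is Step 1: since $\gamma_1$ and $\gamma_2$ are only known to exist on half‑lines, one must be certain they actually intersect, and the crucial point is that the convexity/genuine‑nonlinearity hypotheses force $|\dot x|=\sqrt{-\sigma'(u)}$ to remain bounded below by a positive constant along any $B_\pm$ characteristic (because $u$ escapes to infinity while $\sigma'$ is monotone near the ends), so in the universal cover the two curves drift apart at a uniform linear rate and their difference must sweep through all large integers. Everything else is a direct use of the conservation of $r_1,r_2$ along characteristics and of the relation $r_1=r_2-2q(u)$.
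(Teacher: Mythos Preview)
Your argument is correct and follows essentially the same route as the paper: intersect the two opposite-direction characteristics (equivalently, their integer shifts) at points $P_n$ with $t_n\to+\infty$, then use conservation of $r_1,r_2$ together with $q(u(P_n))\to+\infty$ to derive a contradiction. Your Step~1 is in fact slightly cleaner than the paper's, since you extract the eventual lower bound $\sqrt{\delta}$ on the speeds directly from the $B_+$ hypothesis, which gives $t_n\to+\infty$ immediately rather than via the paper's separate argument by contradiction.
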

\begin{proof}

Assume on the contrary that there exist such $\gamma_1=(t,x_1(t)),\
\gamma_2=(t, x_2(t))$ belonging to the same class, say $B_+$, so
that $-u|_{\gamma_{1}}\rightarrow +\infty$ and $-u|_{\gamma_{2}}\rightarrow +\infty$ when
$t\rightarrow+\infty$.

Then by periodicity we can shift the characteristics to get
$$\gamma_1^{(k)}=(t,x_1(t)+k),\  \gamma_2^{(l)}=(t,x_2(t)+l)$$ which are characteristics
of class $B_+$ also for all $k,l \in \mathbf{Z}$. Since the
functions $x_1,x_2$ are solutions of the ODEs $$\dot{x}=\pm
\sqrt{-\sigma'(u)}$$ respectively, it follows that $x_1$
(respectively $x_2$) are strictly monotone increasing (respectively
decreasing) function with the derivative bounded away from zero.
Therefore for sufficiently large $k$ the characteristics $\gamma_1$
and $\gamma_2^{(k)}$  must intersect in a unique point, call it
$P_k$ (see  Fig. 7).

\vskip2.5cm

\begin{picture}(170,100)(-100,-80)
\put(-35,-25){\vector(0,4){100}}
\put(-35,-25){\vector(4,0){220}}
\qbezier(-20,10)(75,15)(150,60)
\qbezier(-20,60)(80,56)(150,10)
\qbezier(-20,35)(80,31)(150,-15)

\put(115,33){\shortstack{$P_k$}}
\put(81,20){\shortstack{$P_{k-1}$}}

\put(188,-28){\shortstack{$t$}}
\put(-37,78){\shortstack{$x$}}
\put(-33,10){\shortstack{$\gamma_1$}}
\put(-33,60){\shortstack{$\gamma_2^{(k)}$}}
\put(-33,35){\shortstack{$\gamma_2^{(k-1)}$}}
\put(101,36){\circle*{3}}
\put(64,23){\circle*{3}}

\put(60,-50){\shortstack{Fig. 7}}

\end{picture}

Denote by $t_k$ the $t$-coordinates of $P_k$. One can see that $t_k$
is monotone increasing and must tend to $+\infty$. Indeed in the
opposite case there exist limits $t_k\nearrow t_*$ and
$P_k\rightarrow P_*$ so that the characteristic $\gamma_2$ lies in
the half plane $t<t_*$ and tends to $-\infty$ when $t\rightarrow
t_*$. But this contradicts the fact that solutions of the equation
(\ref{ode}) are bounded on any compact interval of time.

Therefore we have,
$$-u(t_k,x(t_k))\rightarrow+\infty ,\ k\rightarrow+\infty,$$
and then by formula (\ref{q}) also
$$ r_2(P_k)-r_1(P_k)\rightarrow+\infty ,\ k\rightarrow+\infty.$$

 But this is not possible since by periodicity in $x$ of $(u,v)$ one
 has that
 $r_1(t_0,x)$ and $r_2(t_0,x)$ are bounded, and so by conservation of $r_1,r_2$
 along characteristics $
 r_2(P_k)-r_1(P_k)$ must be bounded also. This contradiction proves the lemma.
\end{proof}

\section{Hyperbolic part of the proof of main Theorems \ref{main1}, \ref{main2}}

Recall that Lemma \ref{semi-infinite} means that if at least one of
the characteristics of the first eigenvalue $\gamma_1$ is unbounded
on $[t_0,+\infty)$ with the property that $-u\rightarrow +\infty$
along it, then along any characteristic of the second eigenvalue one
has: $(r_2)_x<0$.

Let us prove now Theorem \ref {main1}.
\begin{proof}
Assume without loss of generality that the initial data $u(t_0,x)$
lies within the Hyperbolic domain $U_\alpha$. Introduce
$$t'= \sup \{t: [t_0,t]\times \mathbf{S}^1 \subseteq U_\alpha\},
$$ this means that $t'$ is the first moment where non-Hyperbolic
type appears. In other words $u$ becomes equal to $\alpha$ at some
point on the circle $\{t'\} \times \mathbf{S}^1.$  Write
$U'=[t_0,t')\times \mathbf{S}^1$. We prove that $t'$ equals in fact
to $+\infty$. Indeed,
 it follows from Lemma \ref{semi-infinite} that all characteristics of at least one of the
 eigenvalues are of class $A_+$. Without loss of generality let it be the family of the second
 eigenvalue with this property. Then it follows from Lemma \ref{l1} that
 $$(r_2)_x(t_0,x)\leq 0,
$$
holds true for every $x$. But by periodicity this is possible only
when $r_2(t_0,x)$ is in fact constant for the initial moment and so
also everywhere on the whole $U'$. This means that within the domain
$U'$ only $r_1$ can vary. But then $u$ is a function of $r_1$ only
and therefore has constant values along characteristics of the first
eigenvalue. By the construction there exists a point, say $E$, on
$\{t'\} \times \mathbf{S}^1$ where $u=\alpha$. It follows from
continuous dependence of the solutions of the ODE (\ref{ode}) on the
initial data that there exists a characteristic of the first family
terminating at the point $E$, so that $u=\alpha$ also on the whole
characteristic. But this is a contradiction, since $u<\alpha$ for
all points inside $U_\alpha$. This implies that the hyperbolic
domain $U_\alpha$ coincides with the whole semi-infinite cylinder $
[t_0,+\infty)\times \mathbf{S}^1 $.

Furthermore, since we  know that $r_2$ is a constant on the whole
half cylinder then $u$ depends only on $r_1$ and has constant values
along characteristics of the first family (in particular $-u$ does
not tend to infinity) so Lemma \ref{l1} implies
$$(r_1)_x\leq 0.$$ Using periodicity again we conclude that $r_1$ is constant
also everywhere on the half-cylinder. Thus $(u,v)$ is a constant
solution on the semi-infinite cylinder.
 We are done.
\end{proof}

For Theorem \ref{main2} we have to consider the whole infinite
cylinder and characteristics which may be infinite in both
directions. Also in this case elliptic domains cannot be excluded as
before, to treat them one needs an additional tool. In the next
section we treat the Elliptic region. These two steps provide the
proof of Theorem \ref{main2}. The first step goes as follows:

\begin{theorem}
Let $(u,v)$ be a $C^2$-solution of the system (\ref{2}) on the whole
infinite cylinder. Then either $U_\alpha$ or $U_\beta$ coincide with
the whole cylinder and $u,v$ are constants everywhere, or both
$U_\alpha$ and $U_\beta$ are empty, i.e. $u \in [\alpha,\beta]$
everywhere.
\end{theorem}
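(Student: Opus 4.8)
The plan is to show that a nonempty hyperbolic component must in fact fill the whole cylinder, and then to derive constancy essentially as on the half‑cylinder. By the symmetry $x\mapsto -x$ of (\ref{2}) (which preserves periodicity in $x$ and interchanges the two characteristic families) and the symmetry between $U_\alpha$ and $U_\beta$ (for $\sigma$ of type I there is no $U_\beta$ and only the first alternative arises), it is enough to prove: \emph{if $U_\alpha\neq\emptyset$ then $u$ and $v$ are constant} — which makes $U_\alpha$ the whole cylinder and $U_\beta$ empty; the remaining possibility $U_\alpha=U_\beta=\emptyset$ is exactly the assertion $u\in[\alpha,\beta]$ everywhere.

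So assume $U_\alpha\neq\emptyset$. First I would pin down a global sign of the Riemann invariants on $U_\alpha$. By Lemma~\ref{semi-infinite} applied forward, one of the two families has no forward characteristic of class $B_+$; after the symmetry $x\mapsto -x$ we may take it to be the second family, so every forward second‑family characteristic is of class $A_+$ and Lemma~\ref{l1} gives $(r_2)_x\le 0$ throughout $U_\alpha$. Applying Lemma~\ref{semi-infinite} backward then yields two cases: (a) no backward first‑family characteristic is of class $B_-$, so $(r_1)_x\ge 0$ on $U_\alpha$; or (b) no backward second‑family characteristic is of class $B_-$, so $(r_2)_x\ge 0$ on $U_\alpha$, hence $(r_2)_x\equiv 0$ there.

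The core step is to conclude, in each case, that $U_\alpha$ is the whole cylinder. In case (a): from $v=\frac{1}{2}(r_1+r_2)$, $u=q^{-1}\left(\frac{r_2-r_1}{2}\right)$ and the monotonicity of $q^{-1}$, the two bounds give $u_x\ge 0$ on $U_\alpha$. On a circle $\{t_0\}\times\mathbf{S}^1$ meeting $U_\alpha$, a maximal arc of $\{x:u(t_0,x)<\alpha\}$ that is not the whole circle would carry a nondecreasing $u$ between two endpoints at which $u=\alpha$, forcing $u\equiv\alpha$ on the arc — impossible; hence the circle lies in $U_\alpha$, and $u(t_0,\cdot)$ being nondecreasing and periodic it is constant on it. Thus $U_\alpha=T\times\mathbf{S}^1$ for an open nonempty $T\subseteq\mathbf{R}$ with $u=u(t)$ on $U_\alpha$; then (\ref{2}) gives $v_x=-u'(t)$, and periodicity of $v$ in $x$ forces $u'\equiv 0$ on $T$, so $u$ is locally constant on $T$ and by continuity the boundary of $T$ is empty, i.e. $T=\mathbf{R}$; then $u$ is constant and (\ref{2}) makes $v$ constant too. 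In case (b): $(r_2)_x\equiv 0$ and $(r_2)_t=-\lambda_2(r_2)_x=0$ show $r_2$ is constant on each component $V$ of $U_\alpha$, so by (\ref{q}) $u$ depends only on $r_1$ on $V$ and is constant along first‑family characteristics contained in $V$; if $\partial V\neq\emptyset$, choosing $E\in\partial V$ (where $u=\alpha$) and using continuous dependence of solutions of (\ref{ode}) on initial data — exactly as in the proof of Theorem~\ref{main1} — produces a first‑family characteristic in $V$ terminating at $E$, along which $u$ is constant and hence $\equiv\alpha$, contradicting $u<\alpha$ on $V$; so $\partial V=\emptyset$ and $V$, being clopen, is the whole cylinder.

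It then remains to finish case (b): once $U_\alpha$ is the whole cylinder, $(r_2)_x\le 0$ and periodicity of $r_2$ in $x$ force $(r_2)_x\equiv 0$, so $r_2$ is constant; then $u$ is a function of $r_1$ alone and constant along first‑family characteristics, which are now complete (there is no boundary to reach) with $-u$ bounded along them, hence of class $A_+$, so Lemma~\ref{l1} gives $(r_1)_x\le 0$ and periodicity gives $(r_1)_x\equiv 0$; thus $r_1$ is constant and $u,v$ are constant. The case $U_\beta\neq\emptyset$ is symmetric. I expect the main obstacle to be the core step in case (a) — turning the two one‑sided derivative bounds together with the $x$‑periodicity of both $u$ and $v$ into the statement that $U_\alpha$ exhausts the cylinder — and, secondarily, making the boundary/continuous‑dependence argument in case (b) precise; keeping straight which family each forward and backward instance of Lemma~\ref{semi-infinite} controls is also a point requiring care.
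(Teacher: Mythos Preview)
Your overall strategy coincides with the paper's: use Lemma~\ref{semi-infinite} (forward and backward) together with Lemma~\ref{l1} to pin down signs of $(r_i)_x$ on $U_\alpha$, and then convert these into a contradiction or into constancy. Your case~(a) is essentially the paper's Case~2 and is handled correctly, even a bit more directly (you treat the ``whole circle'' sub-case on the spot rather than deferring to Theorem~\ref{main1}).

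The genuine gap is in case~(b), precisely at the point you yourself flag. You claim that ``continuous dependence of solutions of (\ref{ode}) on initial data --- exactly as in the proof of Theorem~\ref{main1} --- produces a first-family characteristic in $V$ terminating at $E$.'' In Theorem~\ref{main1} that step works because the hyperbolic domain is a full sub-cylinder $[t_0,t')\times\mathbf{S}^1$: the first-family characteristics launched from $\{t_0\}\times\mathbf{S}^1$ all survive to time $t'$, and the endpoint map $x_0\mapsto x(t')$ is a continuous degree-one self-map of the circle, hence onto, so some characteristic lands at $E$. For a general connected component $V\subset U_\alpha$ there is no such sweep-out, and in fact \emph{no} first-family characteristic in $V$ can ever terminate on $\partial V$: you yourself note that $u$ is constant (with value $<\alpha$) along each such characteristic, so it stays in $U_\alpha$ for all time. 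So the continuous-dependence claim is not merely imprecise --- as stated it is false. The paper closes this (its Case~1) by a different argument: since $u$, hence $\lambda_1$, is constant along each first-family characteristic in $V$, these characteristics are complete straight lines; two with distinct slopes would meet, contradicting uniqueness, so all share one slope and $u$ is constant on all of $V$; continuity at $\partial V$ then forces $u\equiv\alpha$, the desired contradiction. An equivalent fix in your framework: once the first-family characteristics in $V$ are known to be complete with $-u$ bounded along them, they are of type $A_+$ and $A_-$, so Lemma~\ref{l1} gives $(r_1)_x\le 0$ and $(r_1)_x\ge 0$ on $V$, hence $r_1$ is constant too, and $u,v$ are constant on $V$ directly.
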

\begin{proof}To give a proof assume with no loss of
generality that $U_\alpha$ does not coincide with the whole cylinder
otherwise Theorem \ref{main1} yields the result.

We show that then $U_\alpha$ must be empty. We prove this by
contradiction. Fix a connected component of $U_\alpha$, denote it
$U'$, and take any initial moment $t_0$ with the property that the
intersection of $\{t=t_0\}$ with component $U'$ is not empty. It
consists of the disjoint union of open intervals --- we call them
intervals of Hyperbolicity (the case when it is the whole circle is
covered already by Theorem \ref{main1}).

Consider the following two complementary cases

Case 1. \textit{For any initial moment $t_0$ with the property
$\{t=t_0\}\cap U'\neq \emptyset$, for at least one of the
eigenvalues (say for the second one) all characteristics started
from $t_0$ in positive and negative direction belong to the classes
$A_+,A_-$. }

In this case it follows from Lemma \ref{l1} that $r_2(t_0,x)$ is
constant on any interval of the intersection $\{t=t_0\}\cap U'$
(since $(r_2)_x\geq 0$ and in the same time $(r_2)_x\leq 0$). Since
$t_0$ is arbitrary and $U'$ is connected, this implies that $r_2$ is
constant on the whole connected component $U'$. This implies that
only $r_1$ varies on $U'$ and so $u,v, \lambda_1,\lambda_2$ are
functions of $r_1$ only. This means in particular that $u$ keeps
constant values along characteristics of the first eigenvalue.
Therefore every such characteristic can be extended infinitely in
both directions because if it reaches the boundary of the Hyperbolic
region $U_\alpha$, then $u$ must have value $\alpha$ on the whole
characteristic, which contradicts Hyperbolicity. Moreover, since
$\lambda_1$ is a function of $r_1$ only, so it is a constant along
$\lambda_1$-characteristics, then these characteristics are
necessarily parallel straight lines of the slope $\lambda_1$. So
$U'$ is an infinite strip of the slope $\lambda_1$. Furthermore on
the boundary $u=\alpha$ so $\lambda_1=\sigma'|_{u=\alpha}=0$, so
that $\lambda_1=0$ everywhere on $U'$. Thus these strips are
horizontal and $u$ equals $\alpha$ identically on $U'$. This
contradiction finishes the proof.

Case 2. \textit{There exists $t_0$ such that $\{t=t_0\}\cap U'\neq
\emptyset$ and for each eigenvalue $\lambda_1$ and $ \lambda_2$
there exists a characteristic of class $B_{\pm}$ started at
$\{t=t_0\}\cap U'$ in some direction.}

It follows from the Lemma \ref{semi-infinite} that the directions of these two
characteristics  must be opposite. So assume without loss of
generality that $\gamma_1,\ \gamma_2$ are $\lambda_1,\
\lambda_2$-characteristics in the classes $B_+,\ B_-$ respectively.
Then it follows from the lemmas that the characteristics $\gamma_1,\
\gamma_2$ being extended beyond $t_0$ in the negative and positive
direction respectively belong to classes $A_-, A_+$ respectively.
And thus by the Lemma \ref{l1}
$$
 (r_1)_x(t_0,x)\geq 0,\ (r_2)_x(t_0,x)\leq 0,
$$
for all $x$ in the intervals of Hyperbolicity. So in this case
$(r_1-r_2)(t_0,x)$ is a monotone function in $x$. Then also
$u(t_0,x)$ is monotone  by the formula (\ref{q}) and since $u$
equals $\alpha$ at the ends of the intervals of Hyperbolicity, then
$u=\alpha$ on the whole interval of Hyperbolicity, contradiction.
This contradiction completes the proof of the theorem.
\end{proof}
Let us complete this section with the proof of Theorem
\ref{bounded}.
\begin{proof}[Proof of Theorem \ref{bounded}]
It is now very easy. The assumption that $u$ is bounded implies that
in the Hyperbolic region all characteristics belong to class $A$.
Therefore we can conclude exactly as in the Case 1 of the previous
theorem that either one of the Hyperbolic domains $U_\alpha,U_\beta$
coincides with the whole cylinder and the solution $(u,v)$ is
constant everywhere, or $U_\alpha,U_\beta$ are empty and the
solution satisfies everywhere $u\in [\alpha, \beta]$. This case is
treated in the next section where periodicity in $x$ only of the
function $u$ is needed.
\end{proof}

\section{Elliptic case}
In this section we treat the Elliptic region as follows.
\begin{theorem}
Suppose $(u,v)$ is a $C^2$-solution of the system (\ref{2}) on the
whole cylinder which satisfies $u\in[\alpha,\beta]$ everywhere, and
such that $u$ is periodic in $x$. Then $u,v$ are constants.
\end{theorem}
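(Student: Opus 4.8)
\emph{Proof proposal.} The plan is to produce a functional of $t$ alone which is \emph{convex} precisely because we sit in the elliptic regime, and then use the boundedness $u\in[\alpha,\beta]$ to force it to be constant, which collapses the whole solution. The functional I would use is the simplest one,
$$
 \Phi(t)=\frac12\int_0^1 u(t,x)^2\,dx .
$$
Before computing, I would fix the constant $C$ from (\ref{C}) and write $v(t,x)=Cx+\tilde v(t,x)$ with $\tilde v$ periodic in $x$; extracting the linear part is what makes all boundary terms below vanish, and the fact that $C$ does not depend on $t$ is exactly the content of (\ref{C}).

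Differentiating $\Phi$ once and using $u_t=-v_x=-\tilde v_x-C$, one integration by parts gives $\Phi'(t)=\int_0^1 u_x\tilde v\,dx-C\int_0^1 u\,dx$. Differentiating a second time and substituting $u_{xt}=-\tilde v_{xx}$, $\tilde v_t=v_t=(\sigma(u))_x=\sigma'(u)u_x$ and $\int_0^1\tilde v_x\,dx=0$, a further integration by parts yields the key identity
$$
 \Phi''(t)=\int_0^1 \tilde v_x^{\,2}\,dx+\int_0^1 \sigma'(u)\,u_x^{\,2}\,dx+C^2 .
$$
The reason for taking a \emph{quadratic} integrand $u^2/2$ rather than a general convex one is that the third–derivative cross terms drop out and the non–periodic part of $v$ contributes only the harmless constant $+C^2$; this is the step I expect to require the most bookkeeping, and it is exactly the ellipticity sign $\sigma'\ge 0$ (the opposite of what happens in the hyperbolic zones) that will make $\Phi$ convex.

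Now the hypothesis $u\in[\alpha,\beta]$ is used twice. First, $\sigma'(u)\ge 0$ on the whole closed elliptic region, so every term on the right above is $\ge 0$ and $\Phi$ is a convex function on all of $\mathbf{R}$. Second, $0\le\Phi(t)\le\frac12\max(\alpha^2,\beta^2)$, so $\Phi$ is bounded; a convex function on $\mathbf{R}$ that is bounded above is constant. Hence $\Phi''\equiv 0$, which forces $C=0$ (so $v$ is in fact periodic, which is why periodicity of $v$ was never needed as a hypothesis), $\tilde v_x\equiv 0$, and $\sigma'(u)\,u_x^{\,2}\equiv 0$. From $\tilde v_x\equiv 0$ we get $v=v(t)$, hence $u_t=-v_x=0$, so $u=u(x)$; then $v'(t)=v_t=(\sigma(u(x)))_x$ equates a function of $t$ with a function of $x$, so both equal a constant $d$, i.e. $\sigma(u(x))=dx+e$. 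Periodicity of $u$ forces $d=0$, so $\sigma(u)\equiv e$, and since $\sigma$ is strictly monotone — hence injective — on $[\alpha,\beta]$ because $\sigma'>0$ on $(\alpha,\beta)$, we conclude $u\equiv\mathrm{const}$ and then $v\equiv\mathrm{const}$. Everything after the identity for $\Phi''$ is soft (a Liouville statement on $\mathbf{R}$ together with elementary ODE reasoning); the whole weight of the argument is in recognizing that the ellipticity condition is precisely what gives the convexity of $\Phi$.
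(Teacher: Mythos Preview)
Your argument is correct and follows the same overall strategy as the paper: compute the second $t$–derivative of an integral functional $\int f(u)\,dx$, observe that ellipticity ($\sigma'\ge 0$ on $[\alpha,\beta]$) forces a sign on this second derivative, and invoke a Liouville principle on $\mathbf{R}$. Your computation of $\Phi''$ is right, and your explicit extraction of the linear part $Cx$ of $v$ (and the conclusion $C=0$) is a nice bonus that the paper only alludes to.

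The one genuine difference is the choice of $f$. You take the simplest convex choice $f(u)=u^2/2$, so $\Phi$ is convex, and you then use the hypothesis $u\in[\alpha,\beta]$ a second time to bound $\Phi$ and conclude it is constant. The paper instead picks a strictly concave $f$ with $f(\alpha)=f(\beta)=0$ and $f>0$ on $(\alpha,\beta)$; then $E=\int f(u)\,dx$ is automatically nonnegative and $\ddot E\le 0$, so positivity rather than boundedness forces $E$ constant. Your choice is more elementary (no auxiliary function to construct) and yields $C=0$ for free; the paper's choice packages the constraint $u\in[\alpha,\beta]$ into $f$ itself, so that the strict inequality $f''<0$ immediately kills \emph{both} terms $v_x^2$ and $\sigma'(u)u_x^2$ without further unpacking. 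Either way the endgame (separating variables, using periodicity to kill the linear piece of $\sigma(u)$, and injectivity of $\sigma$ on $[\alpha,\beta]$) is the same, and you carry it out more carefully than the paper's ``then obviously $u,v$ are constants.''
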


\begin{proof}
Take any function $f:[\alpha,\beta]\rightarrow\mathbf{R}$ satisfying
$$f(\alpha)=f(\beta)=0,$$
$$f(u)>0 \ for \ all\ u\in(\alpha,\beta)\  and$$
$$f''(u)<0 \ for \ all\  u \in [\alpha,\beta].$$

Introduce
$$
 E(t)=\int_{S^1}f(u(t,x))dx,
$$
which by the construction is a positive function of $t\in\mathbf{R}$
unless $u$ equals identically $\alpha$ or $\beta$. Compute the
second derivative of $E$ using the system (\ref{2}) and integration
by parts. Notice that for the integration by parts periodicity of
$u$ only is essential and not of $v$. We have
$$
 \ddot{E}=\int_{S_1}f''(u)\left((v_x)^2+\sigma'(u)(u_x)^2\right)dx\leq 0.
$$
Since $u\in [\alpha,\beta]$ then $\sigma'(u)$ is non-negative, by
the assumptions on $\sigma$. So we get that $E$ is a positive
concave function and thus must be constant. Then obviously $u,v$ are
constants everywhere.
\end{proof}

\end{document}